\numberwithin{equation}{section}
\definecolor{violet}{rgb}{0.580,0.,0.827}
\newcommand\dD{\mathrm{d}}
\newcommand{\J}{{\mathcal J}}
\newcommand\br{\begin{remark}}
\newcommand\er{\end{remark}}
\newcommand\bp{\begin{pmatrix}}
\newcommand\ep{\end{pmatrix}}
\newcommand{\be}{\begin{equation}}
\newcommand{\ee}{\end{equation}}
\newcommand\ba{\begin{equation}\begin{aligned}}
\newcommand\ea{\end{aligned}\end{equation}}
\newcommand\ds{\displaystyle}
\newcommand{\beg}{\begin{example}}
\newcommand{\eeg}{\end{exaplem}}
\newcommand{\bpr}{\begin{proposition}}
\newcommand{\epr}{\end{proposition}}
\newcommand{\bt}{\begin{theorem}}
\newcommand{\et}{\end{theorem}}
\newcommand{\bc}{\begin{corollary}}
\newcommand{\ec}{\end{corollary}}
\newcommand{\bl}{\begin{lemma}}
\newcommand{\el}{\end{lemma}}
\newcommand{\bd}{\begin{definition}}
\newcommand{\ed}{\end{definition}}
\newcommand{\brs}{\begin{remarks}}
\newcommand{\ers}{\end{remarks}}
\newtheorem{theorem}{Theorem}[section]
\newtheorem{proposition}[theorem]{Proposition}
\newtheorem{corollary}[theorem]{Corollary}
\newtheorem{lemma}[theorem]{Lemma}
\newtheorem{remark}[theorem]{Remark}
\newtheorem{definition}[theorem]{Definition}
\newtheorem{example}[theorem]{Example}
\newcommand\R{\mathbf R}
\newcommand\bj{{j}}
\newcommand\bk{{\bm k}}
\newcommand\bll{{\bm l}}
\newcommand\bmm{{\bm m}}
\newcommand\bx{{\bm x}}
\newcommand{\bz}{\bm z}
\newcommand{\ber}{\bm e}
\newcommand{\bu}{\bm u}
\newcommand\bv{{\bm v}}
\newcommand\Div{{\rm div}}
\newcommand\bA{{\mathbf A}}
\newcommand\bI{{\mathbf I}}
\newcommand\bP{{\mathbf P}}
\newcommand\cB{{\mathcal B}}
\newcommand\cC{{\mathcal C}}
\newcommand\cH{{\mathcal H}}
\newcommand\cI{{\mathcal I}}
\newcommand\cL{{\mathcal L}}
\newcommand\cM{{\mathcal M}}
\newcommand\cP{{\mathcal P}}
\newcommand\cR{{\mathcal R}}
\newcommand\Supp{\textrm{Supp}}
\newcommand{\proj}{\cI}
\title[A spectral collocation method for the Landau equation]{A spectral collocation method for the Landau equation in plasma
  physics}  
\author{Francis Filbet}
\address{Universit\'e de Toulouse III \& IUF, 
UMR CNRS 5219, Institut de Math\'ematiques de Toulouse,
118 route de Narbonne,
F-31062 Toulouse cedex, France}
\email{francis.filbet@math.univ-toulouse.fr }
\thanks{The author is supported by the EUROfusion Consortium and has received funding
from the Euratom research and training programme 2014--2018 under the grant
agreement No 633053. The views and opinions expressed herein do not
necessarily reflect those of the European Commission.}
\begin{document}

\begin{abstract}
  In this paper we present a spectral collocation method for the fast evaluation
of the Landau collision operator for plasma physics, which
allows us to obtain spectrally accurate numerical solutions. The method is
inspired by the seminal work \cite{PRT2}, but it is specifically designed for Coulombian interactions, taking into account
the particular structure of the operator. It allows us to reduce the
number of discrete convolutions to provide an approximation
of the Landau operator.  Then,  we show that the method preserves the total
mass whereas momentum and energy are approximated with spectral
accuracy. Numerical results for the Landau equation in three dimensions in velocity
space are presented to illustrate the efficiency of the present approach.
\end{abstract}

\date{\today}
\maketitle

\vspace{0.1cm}
\noindent 
{\it Keywords}: Landau equation, spectral collocation methods, plasma
physics applications.
\\

\vspace{0.1cm}
\noindent 
{\it 2010 MSC}: 35R09, 65M70, 65N35.


\tableofcontents


\section{Introduction}
\label{sec:introduction}
\setcounter{equation}{0}

The Landau or Fokker-Planck-Landau equation is a common kinetic model
applied to collisional  plasmas. More precisely, this mathematical model describes binary collisions between charged particles with
long-range Coulomb interaction and is represented by a nonlinear
partial integro-differential equation written as, 
\be
\label{eq:0}
\frac{\partial f}{\partial t} + \bv.\nabla_\bx f = \cC(f,f),
\ee
where the collision operator $\cC(f,f)$ is given by 
\be
\cC(f,f) = \nabla_\bv\cdot\int_{\R^3}\bA(\bv-\bv^*)\biggr[
  \nabla_\bv f(\bv)\,f(\bv^*) \,-\, \nabla_{\bv^*}f(\bv^*)\,f(\bv)\biggr] \dD
  \bv^*, 
\label{eq:1}
  \ee
with the matrix $\bA(\bz)$,
\begin{equation}
\label{def:A}
\bA(\bz) = \,|\bz|^{\gamma +2}\,\left( \bI - \frac{\bz\otimes \bz}{|\bz|^2}\right)\,,
\end{equation}
where $\bI$ represents the identity matrix in dimension three. The function $f$ represents the density of a gas in 
phase space at positions $\bx$ and velocities $\bv$, it is nonnegative
and integrable together with its moments in velocity
up to the second order $f\in L^1((1+|\bv|^2)\dD \bx\dD\bv)$. Actually
it is worth to mention that different values of $\gamma$ lead to usual
classification in hard potentials $\gamma > 0$, Maxwellian molecules
$\gamma = 0$, or soft potentials  $\gamma < 0$. However, this latter
case is of primary importance in plasma physics since it 
involves the Coulombian case when $\gamma = -3$. Furthermore, the algebraic structure of the operator is similar to the Boltzmann
one for rarefied gas dynamics, this leads to physical properties such as the conservation of mass, impulsion, and energy
\begin{eqnarray*}
\int_{\R^3} \cC(f,f)(\bv)\left(
\begin{array}{c}
1 \\          
\bv \\
|\bv|^2 
\end{array}
\right)
\dD \bv \,=\, 0,
\end{eqnarray*}
and the decay of the kinetic entropy $\cH[f](t)$,
\be
\label{eq:H}
\frac{\dD\,\cH[f]}{dt} \,=\, \frac{\dD}{\dD t}\int_{\R^3} f(\bv)\,\ln
(f(\bv))\,\dD \bv\,\leq\, 0.
\ee
Finally, the equilibrium states of the Landau operator, {\it i.e.}
functions $f$ such that $\cC(f,f) = 0$, are given by Maxwellian distribution functions:
\begin{equation}
  \label{eq:M}
\cM_{\rho, \bu,T}(\bv) \,=\, \frac{\rho}{(2\,\pi\, v_{th}^2)^{3/2}} \,e^{-\frac{|\bv-\bu|^2}{2\,v_{th}^2}},
\end{equation}
where $\rho$ is the total mass, $\bu$ the mean velocity,
\be
\label{def:rho-u}
\rho \,=\, \int_{\R} f(\bv)\,\dD \bv, \quad \rho\,\bu \,=\, \int_{\R} f(\bv)\,\bv\,\dD \bv,
\ee
and $v_{th}$ represents the thermal velocity, which depends on the
temperature $T$,
\be
\label{def:T}
3\,\rho \, T \, =\, \int_{\R} f(\bv) |\bv-\bu|^2 \,\dD \bv,
\ee
as $v_{th} = \sqrt{{k_B\,T}/{m}}$, where  $m$ represents the mass of one
particle and $k_B$ is the Boltzmann constant. Notice that these
quantities are observables and can be deduced directly from the moments
of the distribution function with respect to the velocity.

To investigate the long
time behavior of the solution, the relative entropy $\cH[f|\cM_{\rho, \bu,T}]$ plays
a major role, it is defined as
\be
\label{h:rel}
\cH[f|\cM_{\rho, \bu,T}] \,:=\,  \cH[f|] - \cH[\cM_{\rho, \bu,T}] = \int_{\R^3} f
\,\log\left(\frac{f}{\cM_{\rho, \bu,T}}\right)\,\dD \bv.
\ee
It allows to measure the distance between the solution $f$ and the
equilibrium. Indeed, thanks to the Csisz\'ar-Kullback inequality, we have
$$
\| f - \cM_{\rho, \bu,T} \|_{L^1}^2 \,\leq \, C_{CK}\, \cH[f|\cM_{\rho, \bu,T}]. 
$$

On the one hand, the collision operator (\ref{eq:1}) is classically obtained 
as a remedy of the
Boltzmann operator for a sequence of scattering cross sections which
converge in a convenient sense to a delta function at zero scattering
angle. In Coulomb collisions small
angle collisions play a more important role than collision 
resulting in large velocity changes. The original derivation of the
equation is based on this idea is due 
to Landau \cite{Landau}.  Depending on one's taste and notion of rigor several mathematical 
derivation of the equations have been performed, we mention here 
the works of Rosenbluth et al.~\cite{RMJ},   Bobylev \cite{bobylev1975}, Arsen'ev and Buryak~\cite{arsen:91}, Degond and 
Lucquin-Desreux~\cite{degond} and
Desvillettes~\cite{desvillettes}. We refer to \cite{vill:new:00}  for  a  review of the main 
mathematical aspects related to this issue.

On the other hand, Arsene'v and Peskov
\cite{arsenev0} have established the existence of weak solutions for
short time in the spacially homogeneous case for the Coulomb
potential. Later a global existence result of renormalized
solution with a defect measure has been obtained by Alexandre and
Villani \cite{alex0,villani0} in the space dependance case and for an
initial data with a finite energy.

Concerning the approximation of the Landau equation
\eqref{eq:0}-\eqref{eq:1}, various numerical methods have been studied. Depending on how to discretize the velocity space, these
methods can be categorized into two classes,
\begin{itemize}
  \item deterministic  methods based on
    finite difference schemes on a grid in velocity;
    \item  probabilistic
algorithms also known as particles methods.
\end{itemize}

In contrast with the Boltzmann equation where Monte Carlo methods 
plays a major role in numerical simulations,  the extension to long range forces, in particular 
Coulomb interactions, is more challenging and still requires a lot of attention \cite{BP, RRCD}. Most of the particle methods for Coulomb interaction have been derived more on a physical 
intuition basis and not directly from the Landau equation \cite{Nanbu,BP}.  A detailed discussion about this is beyond the aims 
of the present paper and we refer the reader to \cite{Nanbu,BP, PBM,
  RRCD}  for a more complete treatment and to \cite{PBM, DMP} for  reviews.


On the other hand, several deterministic numerical approaches have
been considered  for
the Landau collision operator  \cite{berezin,bobylev0,buet0,bcdl,
  degond0,DK,epp,lemou, lucquin,pota2,chacon1, chacon2, Zaitsev}. However, due to the computational complexity of the equation, which is essentially caused by 
the large number of variables and the three-fold collision integral, 
many papers have been devoted to simplified problems as for the isotropic case \cite{bobylev0} or
for cylindrically symmetric problems in \cite{pekker, KK}. Later from the late 1990s, a considerable amount of attention 
has been directed towards the full Landau equation.  The construction
of conservative and entropic finite difference schemes for the space
homogeneous case has been proposed by Degond and Lucquin-Desreux in
\cite{degond0} and Buet and  Cordier \cite{buet0}.  These  schemes are
built in such a way that the main physical properties are conserved at a discrete level. Positivity of the
solution and discrete entropy inequality are also satisfied. 
Unfortunately, the direct implementation of such 
schemes for space non homogeneous computations is very expensive
since the computational cost increases roughly in proportion to
the square of the number of parameters used to represent the
distribution function in the velocity space.  Thus several fast approximated algorithms to reduce the
computational complexity of these methods, based on 
multipole expansions \cite{lemou} or multigrid 
techniques \cite{buet0} have been proposed.
Although these fast schemes are able to preserve the most
relevant  physical properties, the range of applications
seems limited and the degree of accuracy of such 
approaches has not been studied.  Most of these results are provided when we neglect the
space variable in \eqref{eq:0},  we   refer to \cite{CF, DDFT}
for a direct implementation of such schemes taking into account and
space variable and self-consistent interactions in  one \cite{CF} or
two dimensions \cite{DDFT} in the
physical space and in three dimensions in velocity space.

In the meantime, a different approach based on spectral methods, has been 
proposed for the Boltzmann \cite{PR, PP} and 
Landau \cite{PRT1, PRT2, FP} collision operators.
In that case,  the spectral scheme permits 
to obtain spectrally accurate solutions with a reduction of 
the quadratic cost $N^2$ to $N\,\log_2\,N$, where $N$ is 
the total number of unknowns in the velocity space. The lack of discrete 
conservations in the spectral scheme (mass is preserved, whereas momentum 
and energy are approximated with spectral accuracy) is compensated by
its higher accuracy and efficiency. In particular the scheme allows
easily the implementation of grid-refinement techniques in the velocity space.

A detailed comparison of the spectral method with the
schemes proposed in \cite{bcdl,lemou} has been done in
\cite{Filbet}. For the same degree of freedom $N$, the computational cost of spectral
algorithm is much higher than multigrid \cite{bcdl}  or
multipole methods \cite{lemou} but this drawback can be compensated by a better
accuracy at least asymptotically when $N\gg 1$.   Let us emphasize
that more recently, spectral algorithms based on Hermite expansion of the
distribution function have been proposed \cite{LRW} allowing
to get exact conservations of mass, momentum and energy but the
computational cost is again larger than $N\,\log_2N$. This latter
method is particularly efficient when the solution remains closed to
the equilibrium since few modes may be used. 


In the present paper, we pursue the idea of spectral methods, but in
the frame of spectral collocation methods, where the distribution
function is now discretized on a grid of the velocity space instead of
computing the time evolution of Fourier modes. Fast Fourier transforms
are only applied to evaluate non-local convolution terms and discrete
derivatives. Here we will follow the idea of the spectral method
described in \cite{PRT1, PRT2} and restrict ourselves to the Coulombian case
$\gamma=-3$, which has a particular structure. We will take
advantage of this framework  to reduce the number of discrete convolutions.


The rest of the article is organized as follows. In the next section 
we describe the main features of our numerical methods and propose the
a new spectral collocation method for the approximation
of the collision operator. Next we discuss some properties of the
numerical scheme as spectral accuracy and preservation of steady states. Finally, several numerical tests for three dimensional
space homogeneous problems are presented to illustrate the efficiency
of the spectral collocation method.


\section{The numerical method}
\label{sec:nm}
\setcounter{equation}{0}
From now, we only consider the Coulombian case ($\gamma=-3$), which is
the most significant for a physical point of view. In that case, the
Landau operator has an additional property which will be the key point
of our approximation. We set $\psi(\bz)=|\bz|$ and observe that the
matrix $\bA$ in \eqref{eq:1} is such that
$$
\bA(\bz) \,=\,  \nabla^2 \psi (\bz).
$$
Hence, the Landau operator \eqref{eq:1} can be now written as
\be
\left\{
  \begin{array}{l}
    g = \psi\star f\,,
    \\[0.9em] 
\cC(f,f) = \Div_\bv\left(\nabla_\bv^2 g\,
    \nabla_\bv f \,-\, \nabla_\bv\Delta_\bv g \,f\right).
    \end{array}\right.
\label{eq:3}
\ee
This formulation will be the key point of our spectral collocation
method. Unfortunately, this formulation is not the most appropriate to
prove conservation of momentum and energy. Indeed, to prove conservation of
momentum, we may use that $g$ satisfies
$$
-\Delta^2 g \,=\, 8\,\pi \, f \quad{\rm in}\,\R^3,
$$
whereas the conservation of energy requires that we come back to the
original formulation and observe that $\bz \in \ker\bA(\bz)$. However,
this formulation is convenient  to reduce the
computational cost since  the scalar and non-local function $g$ 
may be evaluated using a discrete fast Fourier transform.  Moreover, once the function $g$ is provided, the operator $\cC(f,f)$
is a local convection/diffusion operator, hence it can be approximated
either by finite difference formula or spectral approximation.  Let us
emphasize that this formulation is widely used in physics where the potential $g=\psi\star f$ refers to the Rosenbluth potential \cite{RMJ}.

Now let us explain our spectral collocation method for \eqref{eq:3}. We  consider a set of equidistant points $(\bv_\bj)_{\bj\in \J_{n}} \subset [-R,R]^3$ with  $\J_{n}:=\llbracket
-n/2 ,n/2-1\rrbracket^3$ where $n$ is an even integer and a real
$R>0$, which determines the computational domain in velocity.   Let us
suppose that  an approximation of the distribution function is known at the mesh points $(\bv_\bj)_{\bj\in
  \J_{n}}$.

\subsection{Computation of the convolution term}
Similarly to  classical spectral methods for Boltzmann or Landau equations
\cite{PP, PRT2, FMP},  the first step will consists to reduce the
integration domain $\R^3$ to a bounded domain.  It can be shown that for a collision operator such
as  \eqref{eq:3}, we have the following property
\bpr
  \label{prop:1}
  Let $\Supp ( f ) \subset \cB(0, R)$, where $\cB(0, R)$ is the ball of radius $R$ centered in the origin. Then
  \be
  \label{eq:C}
\left\{
  \begin{array}{l}
    g \,=\, \psi^R\star f\,,
    \\[0.9em] 
\cC(f,f) = \Div_\bv\left(\nabla_\bv^2 g\,
    \nabla_\bv f \,-\, \nabla_\bv\Delta_\bv g \,f\right)\,,
  \end{array}\right.
\ee
with $\psi^R = \chi_{[-2R,2R]^3}\,\psi$, where $\chi_B$ denotes the
characteristic function in the set $B$. Moreover, we have $\bv-\bu\in \cB(0,3\,R)$. 
\epr
\begin{proof}
First we  write  $\cC(f,f)$ as
  $$
\cC(f,f)(\bv) \,=\, \Div_\bv\left(\nabla_\bv^2\int_{\R^3}\psi(\bu)
  f(\bv-\bu) \dD \bu\,
  \nabla_\bv f(\bv) \,-\, \nabla_\bv\Delta_\bv \int_{\R^3}\psi(\bu)
  f(\bv-\bu)\dD\bu\,f(\bv)\right)
  $$
  and consider  that $f$ is compactly supported in a ball $\cB(0,R)$. On the one hand,  for  $\bv$ and $\bv - \bu \in \cB(0, R)$, 
  $$
|\bu| \,=\, |\bv - \bv - \bu| \,\leq\, |\bv| \,+\, |\bv + \bu| \,\leq\,  2\,R,
$$
hence in this case, it is  enough to choose $\bu\in [-2R,2R]^3$. On the other hand, when  $\bv$ or $\bv -\bu \notin \cB(0, R)$ we get
that  $\cC(f, f)=0$ since $f\equiv 0$ in $\R^3\setminus \cB(0,R)$.

Therefore, the Landau operator may be written as
$$
 \cC(f,f) \,=\, \Div_\bv\left(\nabla_\bv^2\left[\psi^R\star f\right]\,
  \nabla_\bv f \,-\, \nabla_\bv\Delta_\bv \left[\psi^R\star
    f\right]\,f\right),
$$
which gives formula \eqref{eq:C}.

Finally $|\bv| \leq R$ and $|\bu| \leq 2R$ implies
$$
|\bv-\bu| \leq |\bv| + |\bu| \leq 3R.
$$
\end{proof}

Following Proposition \ref{prop:1}, we want to discretize the
Landau equation in a computational domain $[-R,R]^3$, with $R>0$,
hence we define a truncated collision operator $\cC^R(f,f)$ as
\be
  \label{eq:CR}
\left\{
  \begin{array}{l}
    g^R = \psi^R\star f\,,
    \\[0.9em] 
\cC^R(f,f) = \Div_\bv\left(\nabla_\bv^2 g^R\,
    \nabla_\bv f \,-\, \nabla_\bv\Delta_\bv g^R \,f\right).
  \end{array}\right.
\ee

\begin{remark}
Let us emphasize that this truncation of the domain will affect in
practice the exact conservation of momentum and energy. We refer to
\cite{bobylev0, degond} or
more recently  \cite{chacon1, chacon2} for specific numerical schemes ensuring these
conservations by modifying boundary terms. Here we favor on spectral
accuracy instead of exact conservations.
\end{remark}

As for spectral approximations to Boltzmann \cite{PP,PR}
and Landau \cite{PRT2,FP}, we aim to compute the convolution term
$g^R$ using a discrete fast  Fourier transform. To avoid aliasing
effects \cite{CHQZ},  we
need to extend the distribution function  $f$ carefully in a larger domain. Following
\cite{PP,PR, FMP} and Figure \ref{fig:0}, we choose  the cube $[-T ,
T ]^3$, where  $f\,=\, 0$ on $[-T, T ]^3 \setminus [-R,R]^3$, and extend it by periodicity to a
periodic function on $[-T , T ]$. As observed in \cite{PP,PRT2} and
illustrated in Figure \ref{fig:0}, it is enough to take $T
\geq 3\,R/2$ to prevent intersections of the regions where $f$ is different from
zero. In practice we take $T=2\,R$ and consider a set of equidistant points
$(\bv_\bj)_{\bj\in \J_{2n}} \subset [-T,T]^3$ with $T=2R$ with $\J_{2n}:=\llbracket
-n ,n-1\rrbracket^3$.

\begin{figure}
\newcommand{\ttE}{(-3,-3) rectangle (3,3)}
\newcommand{\ttF}{(3,-3) rectangle (9,3)}
\newcommand{\ttA}{(0,0)  circle (2)}
\newcommand{\ttB}{(6,0)  circle (2)}
\newcommand{\ttC}{(0,0)  circle (4)}
\newcommand{\ttD}{(6,0)  circle (4)}
\begin{center}
\begin{tikzpicture}
\draw [<->,>=stealth',line width=1pt] (9.5,0) -- (0,0) -- (0,4.5);
\draw[color=red] \ttE;
\draw[color=red] \ttF;
\fill[opacity=0.5,red]  \ttA;
\fill[opacity=0.5,red] \ttB;
\draw[black, dashed] \ttC;
\draw[black, dashed] \ttD;
\draw (0,0) node{$\bullet$};
\draw (0,0) node[below left]{$0$};
\draw (3,0) node{$\bullet$};
\draw (3,0) node[below left]{$T$};
\draw (6,0) node{$\bullet$};
\draw (6,0) node[below left]{$2T$};
\draw (9,0) node{$\bullet$};
\draw (9,0) node[below left]{$3T$};
\draw (0,-1.5) node{$\cB(0,R)$};
\draw (-0.5,-3.5) node{$\cB(0,2R)$};
\end{tikzpicture}
\end{center}
\caption{Computation of the cube $[-T,T]^3$ to avoid interactions
  between the  domain of integration $\cB(0,2R)$ and the periodized
  distribution function $f_n$ for the convolution term
  $g_{2n}^R=\psi^R\star f_{2n}$. A sufficient condition is $T>0$ is such that  $2T\geq 3\,R$.}
\label{fig:0}
\end{figure}
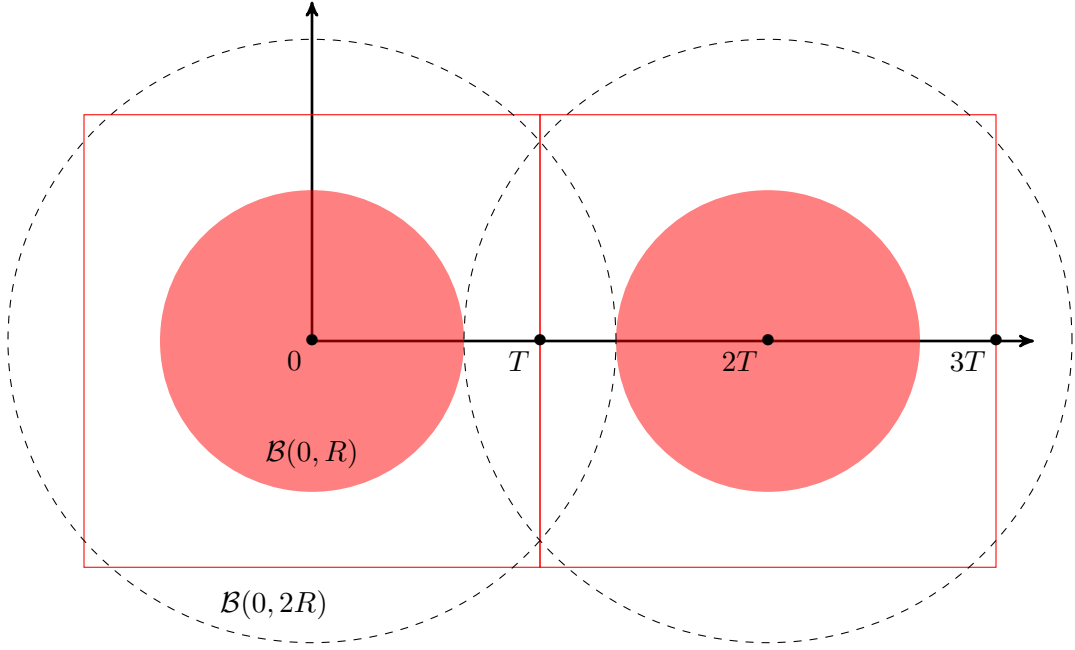

We suppose that  $f$ is only known at the mesh points $(\bv_\bj)_{\bj\in
  \J_{n}}\subset [-R,R]^3$ and set $f(\bv_\bj)=0$ for $\bj\in
\J_{2n}\setminus \J_n$.  Then we compute a discrete  Fourier transform as,
$$
\widetilde{f}_{2n}(\bk) \,:=\, \dfrac{1}{(2n)^3}\,\ds\sum_{\bj\in \J_{2n}}
f(\bv_\bj)\,e^{-\frac{i \pi}{T}\,\bk\cdot\bv_\bj}, \quad \bk\in\J_{2n}
$$
and get a function  $f_{2n}$ as a trigonometric polynomial  in the domain $[-T,T]^3$,
$$
f_{2n}(\bv)\,:=\,\ds\underset{\bk\in
  \J_{2n}}{\sum}\,\widetilde{f}_{2n}(\bk)\,e^{\frac{i\,\pi}{T}\,\bk\cdot\bv}.
$$
Due to the orthogonality relation
\begin{equation}
\frac{1}{(2n)^3} \sum_{\bj\in\J_{2n}}
e^{-\frac{i\,\pi}{T}\,\bk\cdot\bv_j} \,=\,
\left\{\begin{array}{ll}
         1, & {\rm if\,} \bk=\, 2n \,m\, {\bf 1}, \quad m=0,\,\pm 1,\,\pm 2,\ldots
         \\
         0,  &{\rm else},
       \end{array}\right.
\label{p:ortho}
     \end{equation}
     where ${\bf 1}=(1,1,1)$, we have the relation for any $\bj\in\J_{2n}$,
     $$
f_{2n}(\bv_\bj)\,=\,\ds\underset{\bk\in
  \J_{2n}}{\sum}\,\widetilde{f}(\bk)\,e^{\frac{i\,\pi}{T}\,\bk\cdot\bv_\bj}
\,=\, f(\bv_\bj).
     $$
Consequently $f_{2n}$ is the $n$-degree trigonometric interpolant of
$f$ at the nodes $(\bv_\bj)_{\bj\in \J_{2n}}$, that is,
$f_{2n}(\bv_\bj)\,=\, f(\bv_\bj)$ for all $\bj\in\J_{2n}$.
    
The first step consists in computing the convolution term
$g_{2n}^R$ by substituting the function $f_{2n}$ in the first equation of \eqref{eq:CR},  which yields
 \begin{equation}
  \label{def:gn}
g_{2n}^R \,:=\, \sum_{\bk\in \J_{2n}} \widetilde{g}_{2n}(\bk)\, e^{\frac{i\,\pi}{T}\,\bk\cdot\bv},
\end{equation}
where  $\widetilde{g}_{2n}$ is simply given by
\begin{equation}
  \label{def:hatL0}
  \widetilde{g}_{2n} (\bk) \,\,=\,\, \widetilde f _{2n} (\bk) \, \widetilde{\psi}(\bk),
\end{equation}
with
$$
  \widetilde{\psi}(\bk) \,=\,  \int_{[-T,T]^3}
  \psi^R(|\bu|)\,e^{-\frac{i\,\pi}{T}\,\bk\cdot\bu}\,\dD\bu \,=\,  \int_{[-T,T]^3}
  |\bu|\,e^{-\frac{i\,\pi}{T}\,\bk\cdot\bu}\,\dD\bu.
  $$
  This kernel  depends on the computational domain of  $f$ through the
  period $T$, hence in order to make this dependence explicit, we
  apply a simple change of variable and get 
  \begin{equation}
  \label{def:hatL1}
\widetilde{\psi}(\bk) \,=\,  \left(\frac{T}{\pi}\right)^4\,\int_{[-\pi,\pi]^3}
  |\bz|\,e^{-i\,\bk\cdot\bz}\,\dD\bz.
  \end{equation}
  Therefore, the computation of $\widetilde{\psi}(\bk) $ can be
  performed easily  by computing the Fourier coefficients corresponding to the periodic
  function $\bu \mapsto |\bu|$.

\subsection{Computation of $\cC(f_n,f_n)$}
From the trigonometric polynomials $g_{2n}^R$ defined in the cube
$[-T,T]^3$, we want to evaluate the collision operator $\cC$ at the
collocation points $(\bv_\bj)_{\bj\in\J_n}\,\subset \,[-R,R]^3$.
Therefore, we compute a new approximation $f_{n}$ as a trigonometric polynomial  in the domain $[-R,R]^3$,
$$
f_{n}(\bv)\,:=\,\ds\underset{\bk\in
  \J_{n}}{\sum}\,\widetilde{f}_{n}(\bk)\,e^{\frac{i\,\pi}{R}\,\bk\cdot\bv}\,,
$$
with
$$
\widetilde{f}_{n}(\bk) \,:=\, \dfrac{1}{n^3}\,\ds\sum_{\bj\in \J_{n}}
f(\bv_\bj)\,e^{-\frac{i \pi}{R}\,\bk\cdot\bv_\bj}, \quad \bk\in\J_{n}\,,
$$
such that  $f_{n}(\bv_\bj)\,=\, f(\bv_\bj)$ for all $\bj\in\J_{n}$.

Therefore, we define the interpolation operator $\proj_{n}$ such that
$f_{n}=\proj_{n}f$, which can be considered as the orthogonal
projection upon the space  $\cP_{n}$ of trigonometric polynomials of
degree $n$, with respect to the discrete approximation of the $L^2$
inner product,
$$
\cP_{n} \,:=\, {\rm span}\left\{ e^{\frac{i \pi}{R}\,\bk\cdot\bv},
  \quad \bk=(k_1,k_2,k_3),\, {\rm \,with\,} k_\alpha=-n/2,\ldots,n/2-1,
  \,{\rm \,for\,}\alpha\in\{1,\,2,\,3\}\right\}.
$$
Actually , the bilinear form
$$
\langle f, h \rangle  \,=\, \frac{1}{n^3} \sum_{\bj\in\J_{n}}  f(\bv_j)
\, h(\bv_j)  
$$
coincides with the inner product of $L^2([-R,R]^3)$ when $f$ and $h \in
\cP_{n}$.

The interpolant $\proj_{n}f$ of a continuous function $f$ satisfies
the identity
$$
\langle \proj_{n}f, h \rangle  \,=\, \langle f, h \rangle, \qquad h\in\cP_{n}.
$$
Thus, from $f_{n}$ and $g_{2n}$, we compute an approximation $\cC_n^R(f_{n},f_{n})$ as 
\begin{eqnarray*}
\cC_n^R(f_{n},f_{n}) &=& \Div_\bv \proj_{n}\left(\nabla_\bv^2 g_{2n}^R\,
    \nabla_\bv f_{n} \,-\, \nabla_\bv\Delta_\bv g_{2n}^R
                     \,f_{n}\right),
  \end{eqnarray*}
Now for the Fourier collocation method, we will require that the residual
$$
\cR_n = \frac{\partial f_n}{\partial t} -  \cC_n^R(f_n,f_n)
$$
vanishes at the grid points $(\bv_\bj)_{\bj\in\J_{n}}\subset [-R,R]^3$, that is,
$$
\cR_{n}(t,\bv_j) \,=\,0,\quad \bj \in\J_{n}.
$$
This yields $n^3$ equations to determine the point values
$f_n(t,\bv_j)$ of the numerical solution.  In other words, the
pseudospectral approximation $f_n$ satisfies the equation
\be
\label{eq:24}
\frac{\partial f_n}{\partial t} \,\,=\;\;  \cC_n^R(f_n,f_n).    
\ee
Notice that from $g_{2n}^R$ defined in $[-T,T]^3$, we only compute $\nabla_\bv^2 g_{2n}^R$ and
$\nabla_\bv\Delta_\bv g_{2n}^R$  at the collocation points $(\bv_\bj)_{\bj\in\J_{n}}$ in the smaller cube $[-R,R]^3$ to evaluate
$\cC_n^R(f_n,f_n)$.

\section{Properties of the pseudospectral method}
We point out that because of the periodicity  assumption on the
operator $\proj_n$, the collision operator $\cC^R_n( f_n,f_n )$ preserves in time
the mass
$$
\int_{[-R,R]^3} \cC^R_n( f_n,f_n ) \dD \bv \,=\, \ \int_{[-R,R]^3}
\Div_{\bv}\proj_n(\nabla_\bv^2 g_{2n}^R \nabla_\bv f_n -
\nabla_\bv\Delta_\bv f_n ) \dD \bv \,=\, 0.
$$
In contrast, momentum and energy are
not exactly preserved in time but as we will see below these
variations are controlled by spectral accuracy when the solution is smooth.

\subsection{Spectral accuracy}
As for spectral methods \cite{PRT2}, we can
state the following theorem.

\begin{theorem}
  \label{th:1}
  Consider $\psi^R$ given by \eqref{eq:C} and  a distribution function $f \in H^p$, with $p\geq 4$, which
  is compactly supported in the ball $\cB(0,R)$ with $R>0$. Then there
  exists a nonnegative constant $C^R>0$,  depending on  $R$, such that
  $$
\| \cC^R(f,f) - \cC^R_n(f_n,f_n) \|_{L^2}
 \,\leq\,\frac{C_R}{n^{p-4}} \, \|f\|_{H^p}\,\|f\|_{H^4}.
  $$
\end{theorem}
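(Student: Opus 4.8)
The plan is to follow the classical consistency analysis for Fourier spectral/collocation schemes (as in \cite{PRT2}), organised around three ingredients: the approximation and stability properties of the interpolation operator $\proj_n$ on periodic Sobolev spaces, namely $\|(\Id-\proj_n)w\|_{H^\mu}\le C n^{\mu-s}\|w\|_{H^s}$ and $\|\proj_n w\|_{H^s}\le C\|w\|_{H^s}$ for $0\le\mu\le s$ and $s>3/2$; the multiplicative (tame) estimate $\|uw\|_{H^s}\le C(\|u\|_{H^s}\|w\|_{H^t}+\|u\|_{H^t}\|w\|_{H^s})$, valid for $s\ge 0$ and any fixed $t>3/2$; and the smoothing of the potential. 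This last point is the structural heart: since $-\Delta_\bv^2 g^R=8\pi f$ on the relevant ball (equivalently $\widetilde\psi(\bk)\sim|\bk|^{-4}$), the map $u\mapsto \psi^R\star u$ gains four derivatives, $\|\psi^R\star u\|_{H^{\mu+4}}\le C_R\|u\|_{H^\mu}$. Writing $\Lambda(u,w):=\nabla_\bv^2u\,\nabla_\bv w-\nabla_\bv\Delta_\bv u\,w$, so that $\cC^R(f,f)=\Div_\bv\Lambda(g^R,f)$ and $\cC_n^R(f_n,f_n)=\Div_\bv\proj_n\Lambda(g_{2n}^R,f_n)$, I would split by bilinearity of $\Lambda$:
\[
\cC^R(f,f)-\cC^R_n(f_n,f_n)=\underbrace{\Div_\bv(\Id-\proj_n)\Lambda(g^R,f)}_{E_1}+\underbrace{\Div_\bv\proj_n\big[\Lambda(g^R-g_{2n}^R,f)+\Lambda(g_{2n}^R,f-f_n)\big]}_{E_2}.
\]

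For $E_1$ (the genuine consistency error) I would use $\|\Div_\bv V\|_{L^2}\le\|V\|_{H^1}$ together with the interpolation estimate $\|(\Id-\proj_n)w\|_{H^1}\le C n^{-(p-4)}\|w\|_{H^{p-3}}$, reducing matters to bounding $\|\Lambda(g^R,f)\|_{H^{p-3}}$. Applying the tame estimate to each product and the four-derivative gain (for instance $\|\nabla_\bv^2 g^R\|_{H^{p-3}}\le C\|g^R\|_{H^{p-1}}\le C\|f\|_{H^{p-5}}$, while $\|\nabla_\bv^2 g^R\|_{H^2}\le C\|f\|_{L^2}$), I would distribute regularity so that in each product one factor carries $\|f\|_{H^p}$ and the other only $\|f\|_{H^4}$, yielding $\|\Lambda(g^R,f)\|_{H^{p-3}}\le C_R\|f\|_{H^p}\|f\|_{H^4}$.

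For $E_2$ I would first discard divergence and projection cheaply, $\|E_2\|_{L^2}\le\|\proj_n[\cdots]\|_{H^1}\le C\|[\cdots]\|_{H^2}$, using the $H^2\to H^2$ stability of $\proj_n$. The crucial observation is that $g_{2n}^R=\psi^R\star f_{2n}$ holds \emph{exactly} as a periodic convolution, and that by Proposition \ref{prop:1} with the padding $T=2R$ no aliasing occurs on $\cB(0,2R)$, so on the relevant domain $g^R-g_{2n}^R=\psi^R\star(f-f_{2n})$; the four-derivative gain then gives $\|g^R-g_{2n}^R\|_{H^{\mu+4}}\le C_R\|f-f_{2n}\|_{H^\mu}\le C_R(2n)^{\mu-p}\|f\|_{H^p}$. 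Combining this with the interpolation error $\|f-f_n\|_{H^\mu}\le C n^{\mu-p}\|f\|_{H^p}$ and the uniform bound $\|g_{2n}^R\|_{H^{\mu+4}}\le C_R\|f\|_{H^\mu}$, the tame estimates applied to $\Lambda(g^R-g_{2n}^R,f)$ and $\Lambda(g_{2n}^R,f-f_n)$ bound each $H^2$ norm by $C_R n^{-(p-4)}\|f\|_{H^p}\|f\|_{H^4}$; the three derivatives costed by $\nabla_\bv\Delta_\bv$ are harmless, since the convolution and interpolation errors decay faster than $n^{-(p-4)}$.

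The case $p=4$ is degenerate: the claimed rate is $O(1)$ and the statement reduces to the uniform bound $\|\cC^R(f,f)\|_{L^2}+\|\cC_n^R(f_n,f_n)\|_{L^2}\le C_R\|f\|_{H^4}^2$, which the same product/smoothing estimates give directly, so the rate argument may assume $p\ge5$ (guaranteeing $p-3>3/2$). I expect the main obstacle to be the rigorous control of $g^R-g_{2n}^R$: one must simultaneously justify that the discrete FFT convolution coincides with the periodic convolution of the interpolant, that periodicity introduces no aliasing (precisely the geometric condition $2T\ge3R$ of Figure \ref{fig:0}), and that the truncated, non-smooth kernel $\psi^R=\chi_{[-2R,2R]^3}\,|\cdot|$ still acts as a four-fold smoothing operator in the Sobolev scale. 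A secondary but unavoidable point is the careful bookkeeping in the tame estimates needed to produce the sharp factorisation $\|f\|_{H^p}\|f\|_{H^4}$ rather than the cruder $\|f\|_{H^p}^2$.
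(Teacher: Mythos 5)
Your decomposition is essentially the paper's: your $E_1$ is exactly its first error term (interpolation of the product $h=\nabla_\bv^2g^R\,\nabla_\bv f-\nabla_\bv\Delta_\bv g^R\,f$, estimated via \eqref{esti:sp} with $l=1$, $l_0=p-3$, which is where the rate $n^{-(p-4)}$ comes from), and your $E_2$ merges, by bilinearity, its second and third terms ($\cB_1^R-\cB_2^R$, replacing $g^R$ by $g_{2n}^R$, and $\cB_2^R-\cC_n^R(f_n,f_n)$, replacing $f$ by $f_n$). Your observation that $g_{2n}^R=\psi^R\star f_{2n}$ exactly as a periodic convolution is correct and consistent with how the paper treats that term.

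The genuine gap is your reliance on the four-derivative smoothing $\|\psi^R\star u\|_{H^{\mu+4}}\le C_R\|u\|_{H^\mu}$, equivalently $\widetilde\psi(\bk)\sim|\bk|^{-4}$. The biharmonic identity $-\Delta_\bv^2 g=8\pi f$ holds for the \emph{untruncated} kernel on $\R^3$; the operator actually used in the scheme is periodic convolution with the periodization of $\psi^R=\chi_{[-2R,2R]^3}\,|\cdot|$, and the periodization of $\bu\mapsto|\bu|$ has jumps in its normal derivative across the faces of the cube, so its Fourier coefficients decay only like $|\bk|^{-2}$ in the normal directions, not $|\bk|^{-4}$. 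Hence the bound $\|g^R-g_{2n}^R\|_{H^{\mu+4}}\le C_R\|f-f_{2n}\|_{H^{\mu}}$ is unjustified, and you need it both to distribute regularity in $E_1$ and to recover the rate in $E_2$ (where you also give away a derivative by passing to the $H^2$ norm of the bracket rather than $H^1$). The gain is in fact unnecessary: the paper uses only Young's convolution inequality, $\|\nabla_\bv^r g^R\|_{L^\infty}\le\|\psi^R\|_{L^2}\|\nabla_\bv^r f\|_{L^2}$ and $\|\nabla_\bv^r g^R\|_{L^2}\le\|\psi^R\|_{L^1}\|\nabla_\bv^r f\|_{L^2}$ — i.e.\ the convolution is bounded on every Sobolev level but gains nothing — together with the interpolation estimate applied \emph{directly to $g^R$} with $l=4$, $l_0=p$, giving $\|g^R-g_{2n}^R\|_{H^4}\le C_R\,n^{-(p-4)}\|\psi^R\|_{L^1}\|f\|_{H^p}$, which combined with $L^\infty$ control of $f$ and $\nabla_\bv^2 f$ (via $H^2\hookrightarrow L^\infty$, whence the $\|f\|_{H^4}$ factor) yields exactly the claimed rate. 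Replacing your smoothing step by this Young-plus-interpolation argument, and measuring the $E_2$ bracket in $H^1$ rather than $H^2$, repairs the proof and brings it in line with the paper's.
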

\begin{proof}
 First for  $f \in H^{r}$, with $r\geq 1$,  we consider $g^R=\psi^R\star f$, where $\psi^R$ is
 continuous and $\psi^R\in L^1\cap L^\infty$, hence we  get from the
Young's convolution inequality, for any $r\geq 0$
\begin{equation}
  \label{esti:0}
  \left\{
    \begin{array}{l}
\ds\|\nabla_\bv^r g^R\|_{L^\infty} \leq \|\psi^R\|_{L^2}\, \|\nabla_\bv^r
      f\|_{L^2}\,,
      \\[0.9em]
      \ds\|\nabla_\bv^r g^R\|_{L^2} \leq \|\psi^R\|_{L^1}\, \|\nabla_\bv^r
      f\|_{L^2}\,.
      \end{array}\right.
\end{equation}
Also let us  remind the following result in approximation theory : for $f\in
H^{l_0}_\#$, we have for any $0\leq l \leq l_0$,
\begin{equation}
\label{esti:sp}
\| f - \proj_n f \|_{H^l} \,\leq \, C_R \,
\frac{\|f\|_{H^{l_0}}}{n^{l_0-l}}.
\end{equation}
We now introduce
$$
\left\{
  \begin{array}{l}
\cB^R_1(f,f) =  \Div_\bv \proj_n\left(\nabla_\bv^2 g^R\,
    \nabla_\bv f \,-\, \nabla_\bv\Delta_\bv g^R
                     \,f\right),
    \\[0.9em]
   \cB^R_2(f,f) =  \Div_\bv \proj_n\left(\nabla_\bv^2 g_{2n}^R\,
    \nabla_\bv f \,-\, \nabla_\bv\Delta_\bv g_{2n}^R
    \,f\right),
    \end{array}\right.
    $$
    and estimate the numerical error as
    \begin{eqnarray}
      \label{t:0}
\| \cC^R(f,f)  - \cC_n^R(f_n,f_n) \|_{L^2} &\leq&  \| \cC^R(f,f)  -
                                                  \cB_1^R(f,f) \|_{L^2}
      \\
      &+& \| \cB^R_1(f,f)  - \cB_2^R(f,f) \|_{L^2}
\nonumber
      \\    &+& \| \cB_2^R(f,f)  - \cC_n^R(f_n,f_n) \|_{L^2}.
          \nonumber
\end{eqnarray}
The first term on the right hand side of \eqref{t:0} measures the interpolation error
of trigonometric polynomials. We set $h\,=\,\nabla_\bv^2g^R\,\nabla_\bv f - \nabla_\bv\Delta_\bv g^R
\,f$ and observe that since $f$ is compactly supported in $\cB(0,R)$,
the function $h$ is also compactly supported, hence it is periodic
and belongs tp $H^{p-3}_{\#}([-R,R]^3)$. Indeed, from the H\"older
inequality and next  the first Young convolution's inequality \eqref{esti:0}, we get
\begin{eqnarray*}
  \| h \|_{H^{p-3}}
&\leq& C\,\left( \|g^R\|_{W^{p-1,\infty}} \,
                         \| f\|_{H^{p-2}} \,+\,  \|g^R\|_{W^{p,\infty}} \,
                         \| f\|_{H^{p-3}} \right)\,,  \\
  &\leq& \, C\,\|g^R\|_{W^{p,\infty}} \,
                         \| f\|_{H^{p-2}}\,, \\
  &\leq&   C\,\|\ \psi^R\|_{L^2} \, \| f\|_{H^{p}}\,\| f\|_{H^{p-2}}\,. 
\end{eqnarray*}
Applying \eqref{esti:sp} to $h$ with $l=1$ and $l_0=p-3$, it yields
$$
\| \cC^R(f,f)  -\cB_1^R(f,f) \|_{L^2}  \leq  \|\proj_nh - h \|_{H^1}
\leq C_R \,\frac{\|h\|_{H^{p-3}}}{n^{p-4}}\,, 
$$
hence, 
\begin{equation}
\label{t:1}
\| \cC^R(f,f)  -\cB_1^R(f,f) \|_{L^2}  \leq  \frac{C_R}{n^{p-4}} \,  \|\psi^R\|_{L^2}\, \|f\|_{H^p} \, \|f\|_{H^{p-2}}\,.
\end{equation}
The second term in \eqref{esti:0} corresponds to the error between
$g^R$ and its trigonometric interpolant $g_{2n}^R$ defined in the cube
$[-T,T]^3$. Let us notice that since $g_{2n}^R \in
\cP_{2n}([-T,T]^3)$ with $T=2R$, whereas $f$ is compactly supported in
$\cB(0,R)$, as a consequence the function $\nabla_\bv^2 g_{2n}^R \,\nabla_\bv f -
\nabla_\bv\Delta_\bv g_{2n}^R \, f$ is smooth and compactly supported in
$\cB(0,R)$ and its restriction to the cube  $[-R,R]^3$ can be regarded
as a smooth and periodic function. Again applying the
stability estimate to any $u\in H^1_\#$
$$
\|\Div_\bv \left(\proj_n u\right)\|_{L^2}\,\leq \,\|\proj_n u\|_{H^1}\,\leq\, C\, \|u\|_{H^1}
$$
and the H\"older inequality, we get 
\begin{eqnarray*}
\| \cB_1^R(f,f)   -\cB_2^R(f,f) \|_{L^2}  & \leq & \|\nabla_\bv f \,
\nabla_\bv^2 (g^R - g^R_{2n}\|_{H^1} \,+\, \| f \, \nabla_\bv\Delta_\bv (g^R - g_{2n}^R)\|_{H^1}\,,
  \\
                                          & \leq & C\,\left(
                                                   \|\nabla_\bv^2
                                                   f\|_{L^\infty}
                                                   \,+\, \|f\|_{L^\infty} \right) \, \| g^R -  g^R_{2n} \|_{H^4}.
\end{eqnarray*}
 Thus, using that $f$ and $\nabla_\bv^2 f \in H^2 \subset L^\infty$
 with continuous embedding, then  applying \eqref{esti:sp}  to $g^R$ with $l=4$ and $l_0=p$ and next \eqref{esti:0}, it
 gives the following estimate on the second term of the right hand
 side in \eqref{t:0},
\begin{equation}
\label{t:2}
  \| \cB_1^R(f,f)   -\cB_2^R(f,f) \|_{L^2}  \,\leq\,\frac{C_R}{n^{p-4}} \,  \|\psi^R\|_{L^1}\, \|f\|_{H^p}\, \|f\|_{H^4}.
\end{equation}
Finally we estimate the third term in \eqref{t:0} which takes into account
the error between $f$ and its interpolant $f_n$ in the cube
$[-R,R]^3$. We have already seen that $\cB_2^R(f,f)$ is compactly
supported and it can be considered
as a periodic smooth function in $[-R,R]^3$ whereas $\cC_n^R$ is 
the divergence of the interpolant of the  function $h_n\,=\,\proj_n\left(\nabla_\bv^2
g^R_{2n} \nabla_\bv f_n - \nabla_{\bv}\Delta_\bv g_{2n}^R\,
f_n\right)$, which is also periodic in $[-R,R]^3$ and infinitely smooth. 
\begin{eqnarray*}
\| \cB_2^R(f,f) - \cC^R_n(f_n,f_n)  \|_{L^2}  &\leq& \left( \|\nabla_\bv^2
g_{2n}^R\|_{L^\infty}+ \|\nabla_\bv^3
                                                     g_{2n}^R\|_{L^\infty}\right) \, \| \nabla_\bv( f- f_n)\|_{H^1}
  \\
  &+&
\left( \|\nabla_\bv^3 g_{2n}^R\|_{L^\infty}+\|\nabla_\bv^4 g_{2n}^R\|_{L^\infty}\right) \, \|  f- f_n\|_{H^1},
\end{eqnarray*}
which gives applying the same argument as before,
\begin{equation}
\label{t:3}
  \| \cB_2^R(f,f) - \cC^R_n(f_n,f_n) \|_{L^2}  \,\leq\,\frac{C_R}{n^{p-2}} \,  \|f\|_{H^p}\, \|\psi^R\|_{L^2}\, \|f\|_{H^4}.
\end{equation}
Gathering \eqref{t:1}-\eqref{t:3} into \eqref{t:0}, we get the spectral
accuracy, there exists a constant $C_R>0$ such that,
$$
 \| \cC^R(f,f) - \cC^R_n(f_n,f_n) \|_{L^2}
 \,\leq\,\frac{C_R}{n^{p-4}} \, \|f\|_{H^p}\,\|f\|_{H^4}.
$$

\end{proof}

\begin{remark}
  Let us emphasize that in \cite[Corollary 2.1]{PRT1, PRT2}, the
  consistency error is slightly  better since with the
  same assumptions on $f\in H^p$, the authors get
  $$
\| \cC^R(f,f) - \cC^R(f_n,f_n) \|_{L^2}
 \,\leq\,\frac{C_R}{n^{p-2}} \, \left(\|f\|_{H^p}\,+\, \|\cC^R(f_n,f_n)\|_{H^p}\right),
 $$
 where $\cC^R(f_n,f_n)$ represents now a spectral approximation. In
 our case, we provide an estimate of the last term in the right-hand
 side for the Coulombian case, hence  the error is deteriorated.
  \end{remark}
No information is available on the discrete equilibrium states, 
the decay of the numerical entropy and the preservation of positivity.

\subsection{Comparison with the classical spectral method \cite{PRT2}}

We have seen in the previous section that the spectral collocation
method provides an approximation of the collision operator with
spectral accuracy and preserves mass as the classical spectral method does
\cite{PRT2}. However, in term of computational efficiency the spectral
collocation method  has some advantages.

On the one hand,  for a distribution function $f$ such that $\Supp f
\subset \cB(0,R)$, the computational domain is kept relatively small
$[-R,R]^3$,   since the distribution function is simply extended to zero for the
  evaluation of the convolution term $g=\psi\star f$, whereas the
  computational domain of the spectral method is taken as $[-T,T]^3$
  with $T\simeq 2R$.

On the other hand,  as it is pointed out in \cite[Section 3]{PRT2}, the
  evaluation of the Fourier kernel of $\cC(f,f)$ requires nine
  convolutions of the form
  $$
  \widehat{q}(\bk) \,=\, \sum_{\bll+\bmm=\bk} \alpha(\bll)\,\widehat{f}(\bll)\,  \beta(\bmm)\,\widehat{f}(\bmm)\,, 
  $$
where $\widehat{f}(\bll)$ is the Fourier coefficient of $f$. Each
convolution term  requires three discrete Fourier transforms  with a
computational cost of $O\left((2n)^3\,\log((2n^3))\right)$ to avoid
aliasing \cite{CHQZ}, where $n^3$ represents the total number of
Fourier coefficients. This computational cost is mainly due to the
fact that $\cC(f,f)$ is quadratic with respect to $f$, hence 
nonlinear terms become  discrete convolutions in Fourier variable,
whereas the non-local term becomes local in Fourier variable.

Here we take advantage of the Coulombian case and the fact we
discretize the nonlinear operator in the velocity space. Indeed, first the matrix
  \eqref{def:A} can be written as $\nabla^2_\bv \psi$, hence we only
  compute one convolution term $g=\psi\star f$, which corresponds to a
  nonlocal term in velocity then we differentiate
  its trigonometric  polynomial approximation.  The cost of this
  convolution is reduced since $\psi$ is fixed and its discrete
  Fourier coefficient might be stored.   Finally, since we compute
  $\cC(f,f)$ at collocation points $(\bv_\bj)_{\bj\in\J_n}\subset
  [-R,R]^3$, the nonlinear terms do not require an additional cost,
  whereas the non-local term is a convolution, which can be evaluated
  with a cost $O((2n)^3\log((2n)^3))$ to compute $f_{2n}$.

  Finally the last comment concerns the Fourier coefficients. The
  classical spectral method may be written in the form \cite[Section 3]{PRT2},
  $$
\frac{\partial \widehat{f}(\bk)}{\partial t} \,=\, \sum_{\bmm\in
\J_{n}} \widehat{\beta}(\bk-\bmm,\bmm)  \widehat{f}(\bmm)\, \widehat{f}(\bk-\bmm)\,,
$$
with
$$
\widehat{\beta}(\bll,\bmm) = |\bll |^2 \, {\rm
  Tr}\left(\cI(\bmm)\right) \,-\, \bll^t \,\cI(\bmm)\,\bll\,, 
$$
and
$$
\cI(\bmm) \,=\, \int_{\cB(0,\pi)} \frac{1}{|\bz|} \,\left( \bI - \frac{\bz\otimes \bz}{|\bz|^2}\right)\, e^{-i\bmm\cdot \bz}\dD \bz\,. 
$$
This latter integral can be reduced to a one dimensional integral but
the integrand has a singularity in $\bz=0$, hence it has to be
computed carefully using a  recursive quadrature formula. With the
spectral collocation, we only need to evaluate \eqref{def:hatL1},
given by
$$
\widetilde{\psi}(\bk) \,=\,  \left(\frac{T}{\pi}\right)^4\,\int_{[-\pi,\pi]^3}
  |\bz|\,e^{-i\,\bk\cdot\bz}\,\dD\bz,
  $$
  which can be done quickly using a fast Fourier transform to $\bu
  \mapsto |\bu|$. This latter function is not infinitively smooth in the
  cube $[-\pi,\pi]^3$, hence a large number of collocation points  has
  to be used to evaluate $\widetilde{\psi}(\bk)$ and to preserve
  spectral accuracy. Let us notice that in the proof of Theorem \ref{th:1}, we did not
  tale into account the error due to the approximation of these
  Fourier coefficients.

  \subsection{Steady-state-preserving method}
  A major drawback of \eqref{eq:24} is the lack of exact conservations and,
  as a consequence, the incapacity of the scheme to preserve
  Maxwellian distribution function. In this section, we overcome this
  drawback thanks to a new reformulation of the method already propose
  in the context of Boltzmann equation \cite{FPR} that permits to
  preserve the spectral accuracy and to capture the long-time behavior
  of the system.

Let us denote  $\cM_n$  a trigonometric polynomial belonging to
$\cP_n$ and
interpolating the Maxwellian distribution $\cM$, given in \eqref{eq:M}, at the collocation
points $(\bv_\bj)_{\bj\in\J_n}$. Note that due to space homogeneity,
$\cM$ does not change in time and so does $\cM_n$. We simply modify the
previous method \eqref{eq:24} into
\be
\label{eq:24-bis}
\frac{\partial f_n}{\partial t} \,\,=\;\; \cL^R_n(f_n,f_n)\,,
\ee
where
$$
\cL^R_n(f_n,f_n)\,:=\,  \cC_n^R(f_n,f_n) - \cC_n^R(\cM_n,\cM_n)\,.    
$$
Observe that thanks to Theorem \ref{th:1}, this additional term does
not affect the spectral accuracy for smooth solutions to the Landau
equation \eqref{eq:0}-\eqref{eq:1}. Indeed, we have
$$
\| \cC^R_n(\cM_n,\cM_n) \|_{L^2}\,=\,\| \cC^R_n(\cM_n,\cM_n)-\cC^R(\cM,\cM) \|_{L^2}
 \,\leq\,\frac{C_R}{n^{p-4}} \, \|\cM\|_{H^p}\,\|\cM\|_{H^4}.
 $$
 Hence we get the same result as Theorem \ref{th:1} to
 \eqref{eq:24-bis} in term of accuracy but also the conservation of the
 consistent steady state $\cM_n$. We will illustrate in the next
 section the advantage of the steady state preserving method for the
 long time behavior of the solution to \eqref{eq:0}-\eqref{eq:1}.
 
  \section{Numerical simulations}

In this section we perform some numerical tests of the scheme
\eqref{eq:24} and \eqref{eq:24-bis}, to
check the accuracy and the efficiency of the method.

All calculations have been performed by a third-order Runge-Kutta
scheme, with fixed time step.  Of course, the Landau equation suffers
from the stiffness typical of diffusion equations. The stability
condition requires that the time step scales with the square of the
velocity step. This means that by doubling the number of Fourier modes
per direction, the total number of time steps becomes four times
bigger to compute up to the same final time. We have not performed a
stability analysis of the scheme, and the stability condition used in
the computation has been found empirically. No attempt has been made
to overcome the numerical stiffness of the problem caused by
diffusion. Although this is a very important issue and deserves a
careful study, it is beyond the scope of the present paper and we
refer to \cite{LM05} or \cite{FJ, BFR} for research
directions on this topic. In our numerical test, we first look for the
time step $\Delta t$ when $n$ is small and then choose $\Delta
t=O(1/n^2)$ when $n$ is increased.

\subsection{Order of accuracy}
Unfortunately for Coulombian interactions, there is no explicit
solution to the evolution problem \eqref{eq:0}  as it holds true when
$\gamma=0$ in \eqref{eq:1}-\eqref{def:A}. However, since the
Maxwellian distribution \eqref{eq:M} is a stationary state for
\eqref{eq:1}-\eqref{def:A} and since the spectral collocation method
\eqref{eq:24} does not preserve  steady states, the Maxwellian may be
used to evaluate the accuracy. To this aim we choose the initial data
$$
f_0 = \cM_{1, 0,1}  \quad {\rm in }\, [-7,7]^3, 
$$
with a small time step $\Delta t=0.005$. Then, we perform numerical
simulations on the time intervall $t \in [0,1]$ with respectively
$n^3=8^3$, $n^3=16^3$ and $n^3=32^3$. In Figure \ref{fig:t1-1}, we
represent the time evolution of the the relative entropy
\eqref{h:rel} and also  $L^1$, $L^2$ and $L^\infty$
error norms. The rate at which the error decays with the increase of the
number of modes is an indication of spectral accuracy. Let us
emphasize that as we mentioned before, the crucial point is to get an
accurate approximation of Fourier coefficients $(\widetilde{\psi}(\bk)
)_\bk$ in order to get spectral accuracy. Indeed, for this test we use
$1024$ collocation points in each direction to evaluate
\eqref{def:hatL1}.

\begin{figure}[ht!]
\begin{center}
  \begin{tabular}{cc}
    \includegraphics[width=8cm]{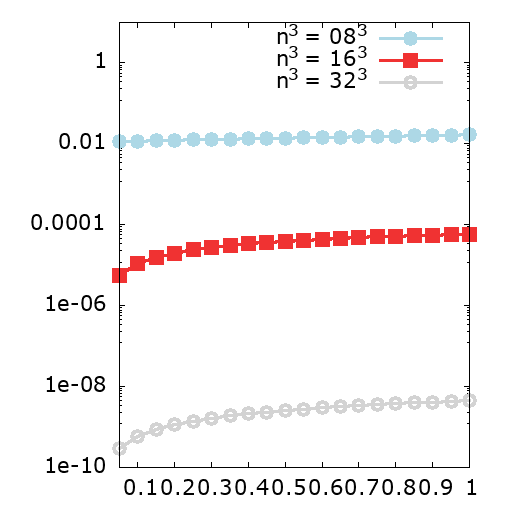} &
  \includegraphics[width=8cm]{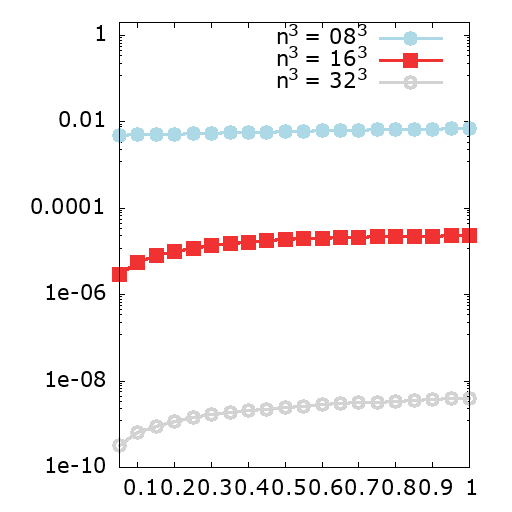}
      \\
 (a) $\|f_N-\cM_{1,0,1}\|_{L^2}$ & (b) $\|f_N-\cM_{1,0,1}\|_{L^\infty}$  
\end{tabular}
  \end{center}
\caption{{\bf Order of accuracy:} time evolution of  the $L^2$ and $L^\infty$ error norms for
  the scheme \eqref{eq:24}.}
\label{fig:t1-1}
\end{figure}

\begin{table}[!ht] 
	\centering
	\vspace{0.15in}
	\begin{tabular}{|l|l|l|l|l|}
          \hline
           $n$ & $L^2$ error norm & Order &
                                                                    $L^\infty$ error norm & Order \\
          \hline
          08 &  1.66\,$\times \,10^{-2}$ & --   &  7.03\, $\times \,10^{-3}$  & --
          \\ \hline
          16 & 5.73\,$\times \,10^{-5}$ & 8.17 & 2.30\,$\times \,10^{-5}$ & 8.24\\
          \hline
          32 & 4.46\,$\times \,10^{-9}$ & 13.64 & 3.99\,$\times \,10^{-9}$ & 12.9  \\
          \hline
        \end{tabular}
	\caption{{\bf Order of accuracy:} numerical error for $L^2$
          and $L^\infty$ norms and order of accuracy.}
	\label{tab:1}
      \end{table}
        
\subsection{Rosenbluth problem}
We choose the initial condition as
$$
f_0(\bv) \,=\,
\frac{1}{S^2} \exp\left( -S\,\frac{(|\bv| - \sigma)^2}{\sigma^2}\right),
$$
 with $\sigma = 0.3$ and $S = 10$ and the integration time is $T_0
 =50$ with $\Delta t = 0.1$ in the computational domain $[-1,1]^3$.

 This test is used to compute the time evolution of the numerical solution \eqref{eq:24-bis} and to compare the
 results with those obtained in \cite{RMJ, PRT2}.

 First, we again illustrate the spectral accuracy by plotting the time
 evolution of total momentum and temperature which are not exactly
 preserved by the spectral collocation method. In  Figure
 \ref{fig:t2-1}, we present the time evolution of momentum and
 temperature in log scale and observe that their variations become
 smaller and smaller when the number of collocation point is
 increasing. With only $n=24$, the variations of momentum and
 temperature are of order $10^{-5}$.
\begin{figure}[ht!]
\begin{center}
  \begin{tabular}{cc}
  \includegraphics[width=8cm]{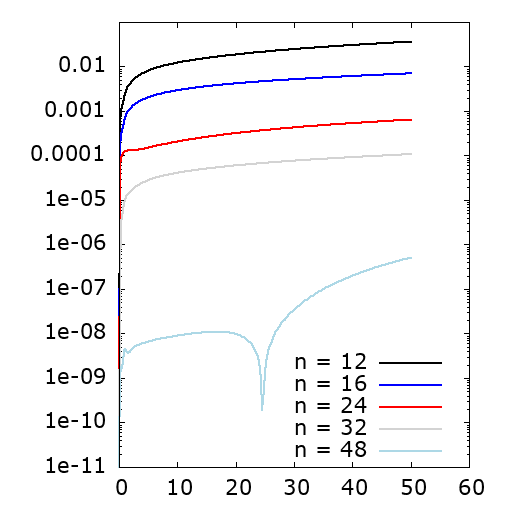} &
  \includegraphics[width=8cm]{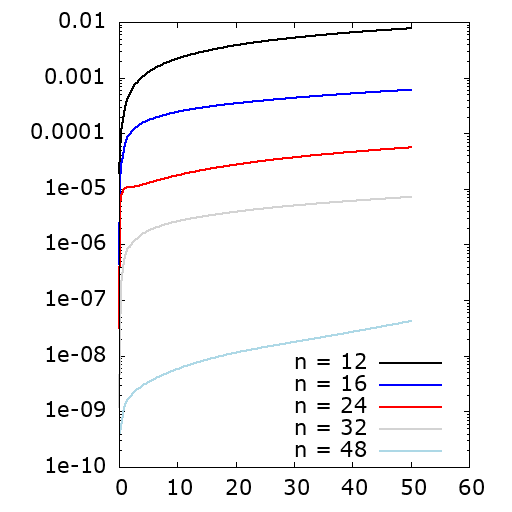}
      \\
 (a) $\|\rho\,\bu(t)\|$ & (b) $|T(t)-T(0)|$  
\end{tabular}
  \end{center}
\caption{{\bf Rosenbluth problem:} time evolution of the variations of
  momentum \eqref{def:rho-u} and
  temperature \eqref{def:T}}
\label{fig:t2-0}
\end{figure}

We compute the time evolution of the distribution function and present
the numerical results in  Figures \ref{fig:t2-1} and \ref{fig:t2-2} where
computations were performed respectively with $n^3 = 16^3$, $n^3 = 24^3$ and $n^3=
32^3$  collocation points.

On the one hand, we present in Figure \ref{fig:t2-1}, the time evolution of the relative entropy
\eqref{h:rel} and the fourth order moment of the distribution function
\be
\label{eq:M4}
M_4(t) = \int_{\R^3} f(t,\bv) \,\dD \bv.
\ee
The relative entropy $\cH[f|\cM_{\rho, \bu,T}]$ and the fourth order moment $M_4$ are computed by
discretizing the expressions \eqref{h:rel} and \eqref{eq:M4} on the velocity grid by a
straightforward formula.

We compare these quantities  with a reference solution obtained with
$n^3=128^3$ collocation points. We observe that with only $n=24$ in
each direction, we get an accurate solution.  

\begin{figure}[ht!]
\begin{center}
  \begin{tabular}{cc}
  \includegraphics[width=8cm]{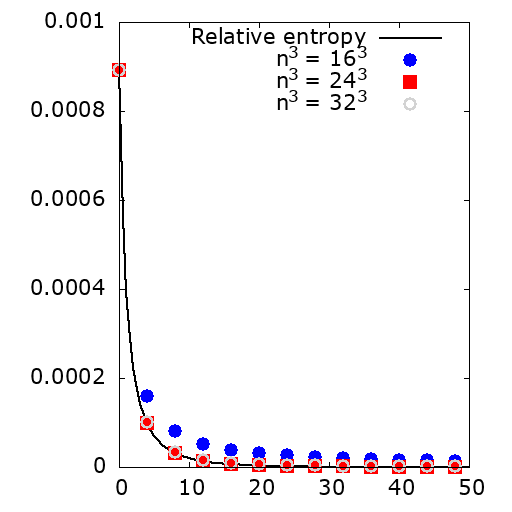} &
  \includegraphics[width=8cm]{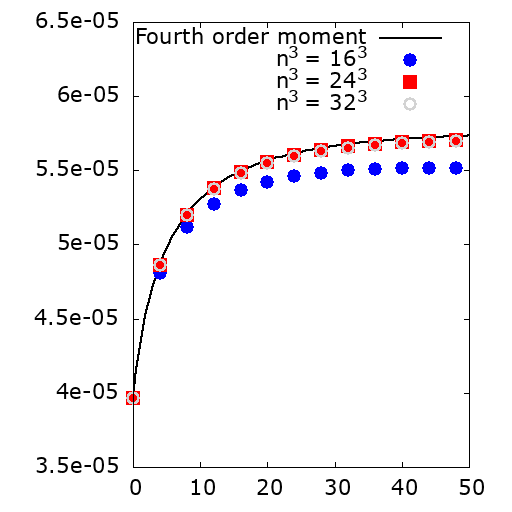}
      \\
 (a) $\cH[f|\cM_{\rho, \bu,T}]$ & (b) $M_4[f]$  
\end{tabular}
  \end{center}
\caption{{\bf Rosenbluth problem:} time evolution of the relative
  entropy \eqref{h:rel} and the fourth order moment \eqref{eq:M4}.}
\label{fig:t2-1}
\end{figure}

On the other hand, in Figure \ref{fig:t2-2} , we show the cross section of the distribution function at times $t = 0, 0.5, 1.5, 3, 5$, and
$50$. The results are in good agreement with those presented in
\cite{RMJ, PRT2} even if the time scale does not corresponds since we
take all physical constants equal to one.

\begin{figure}[ht!]
\begin{center}
  \begin{tabular}{cc}
  \includegraphics[width=8.cm]{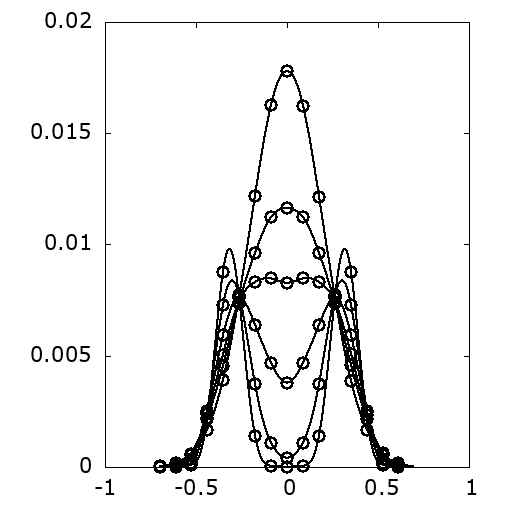} &
  \includegraphics[width=8.cm]{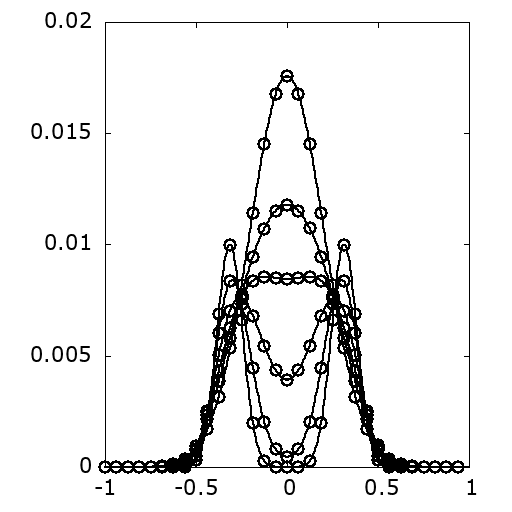}
      \\
 (a) $n^3=16^3$ & (b) $n^3=32^3$  
\end{tabular}
  \end{center}
\caption{{\bf Rosenbluth problem:} time evolution the distribution
  function $f(t,0,0,v_z)$ for (a) $n^3=16^3$ and (b)  $n^3=32^3$ at
  time $t = 0, 0.5, 1.5, 3, 5$, and $50$.}
\label{fig:t2-2}
\end{figure}

\subsection{Trend to equilibrium for the sum of two Gaussians}
We now choose the initial condition as
$$
f_0(\bv)\,=\,\frac{1}{2(2\pi\sigma^2)^{3/2}}\,\left[
  \exp\left(-\frac{|\bv - 2 \sigma\ber_1|^2}{2\sigma^2}\right) \,+\ \exp\left(-\frac{|\bv - 2 \sigma \ber_1|^2}{2\sigma^2}\right)\right],
$$
with $\sigma = \pi/10$ and $\ber_1=(1,0,0)$. We choose $\Delta t=5.\,
10^{-3}$ when $n^3=32^3$ and the computational domain is $[-R,R]^3$
with $R=2.75$.

This test is used to
compute the evolution of the entropy and the pressure tensor defined by
\be
\label{eq:P}
\bP(t) \,=\, \int_{\R^3} (\bv-\bu)\otimes (\bv-\bu) \,f(t,\bv)\,\dD \bv\,,
\ee
where $\bu$ is  the mean velocity.

In this test, we compare the long time behavior of two numerical
solutions given by \eqref{eq:24} and by the steady state preserving
method \eqref{eq:24-bis} with the same numerical resolution. 

In the Figures \ref{fig:t3-1} and \ref{fig:t3-2}, the dotted lines
represent the numerical solutions obtained with $32^3$ modes whereas  the
continuous line represent a reference solution using  $64^3$
modes. Both schemes give similar results when we observe the time
evolution of the relative entropy $\cH[f|\cM_{\rho, \bu,T}]$, which decreases to
zero, when $t$ becomes large.  However, passing to log
scale allows to observe a difference on the two solutions around the
equilibrium since the relative entropy corresponding to the solution
\eqref{eq:24} seems to saturate for $t \geq 1.5$. 

Furthermore, in  Figure \ref{fig:t3-2}, we present the time evolution of the
pressure tensor $\bP$ and the fourth order moment. Again, the steady
state preserving method gives satisfying results when time become
large. Finally, in Figure \ref{fig:t3-3}, we propose several snapshots
of the projection of $f$ at different time,
$$
F(t,v_x,v_y) = \int_{\R} f(t,\bv)\,\dD v_z.
$$

\begin{figure}[ht!]
\begin{center}
  \begin{tabular}{cc}
  \includegraphics[width=8cm]{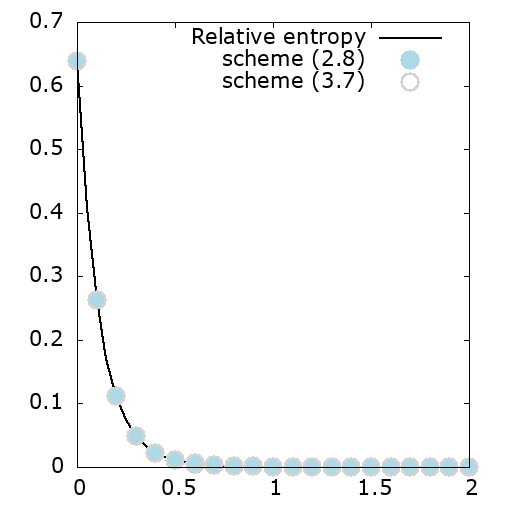} &
  \includegraphics[width=8cm]{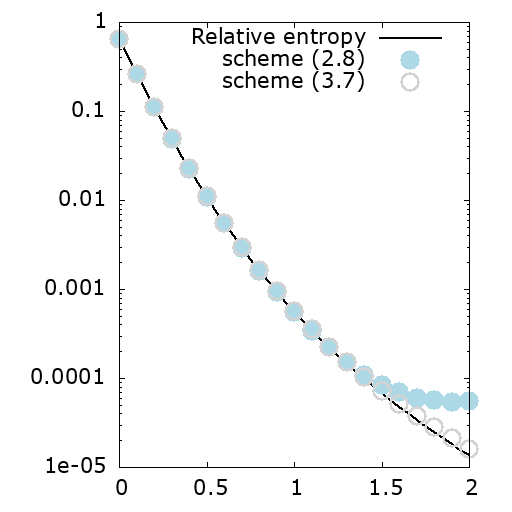}
      \\
 (a) $\cH[f|\cM_{\rho, \bu,T}]$ & (b) $\cH[f|\cM_{\rho, \bu,T}]$ in log scale
\end{tabular}
  \end{center}
\caption{{\bf Trend to equilibrium for the sum of two Gaussians: } time evolution of the relative
  entropy \eqref{h:rel}.}
\label{fig:t3-1}
\end{figure}

\begin{figure}[ht!]
\begin{center}
  \begin{tabular}{cc}
  \includegraphics[width=8cm]{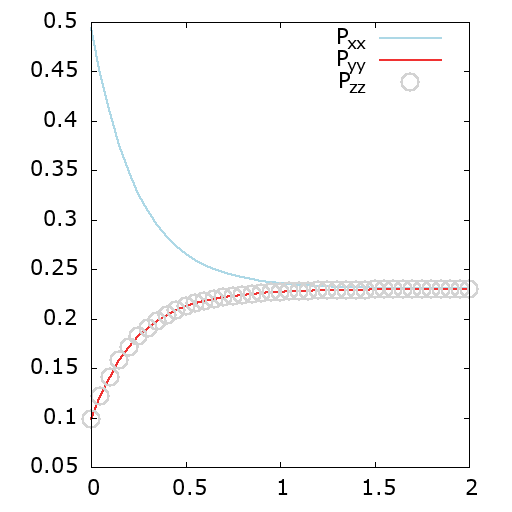} &
  \includegraphics[width=8cm]{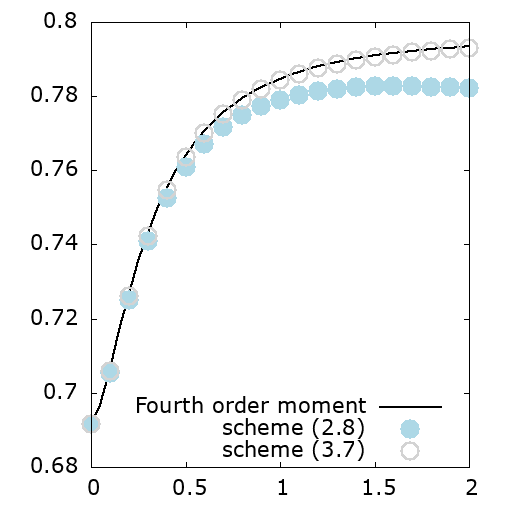}
      \\
 (a) $\bP$ & (b) $M_4$ 
\end{tabular}
  \end{center}
\caption{{\bf Trend to equilibrium for the sum of two Gaussians: } time
  evolution of the Pressure tensor $\bP$ in \eqref{eq:P} and the fourth
  order moment $M_4$ given by \eqref{eq:M4}.}
\label{fig:t3-2}
\end{figure}

\begin{figure}[ht!]
\begin{center}
  \begin{tabular}{cc}
  \includegraphics[width=8.cm]{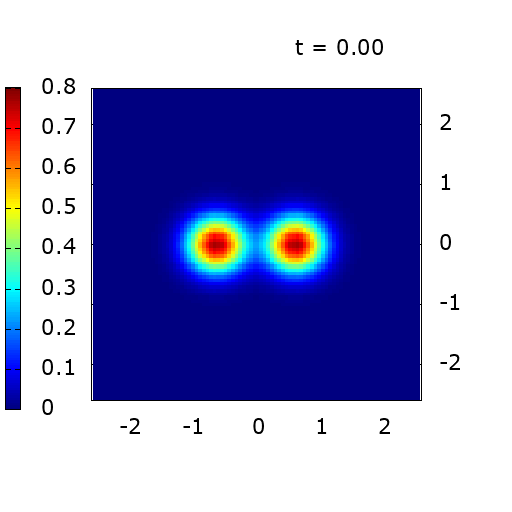} &
  \includegraphics[width=8.cm]{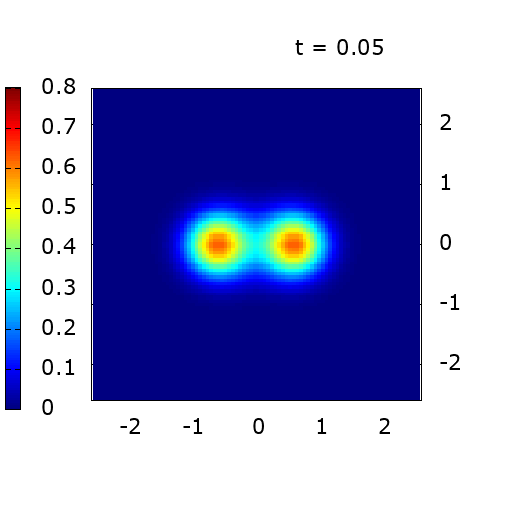}
      \\
  \includegraphics[width=8.cm]{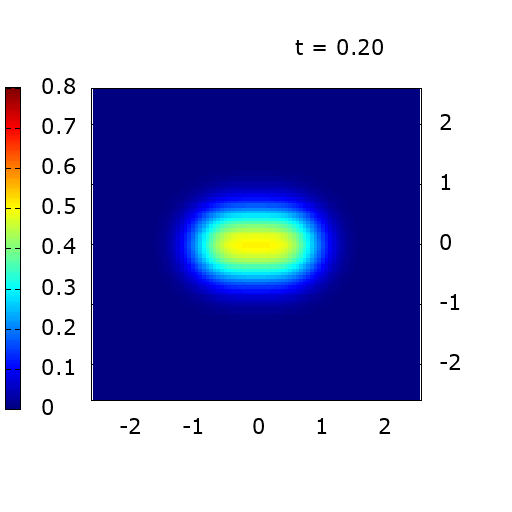} &
  \includegraphics[width=8.cm]{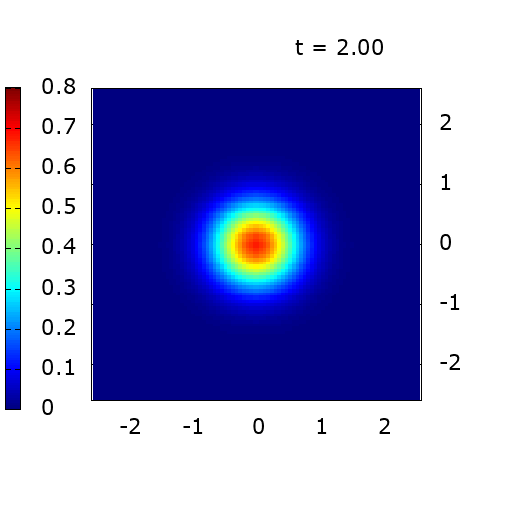}
\end{tabular}
  \end{center}
\caption{{\bf Trend to equilibrium for the sum of two Gaussians: } time
  evolution the projection of the distribution
  function $f$ on the plane $(O, \,v_x, \,v_y)$ at different time.}
\label{fig:t3-3}
\end{figure}

\section{Conclusion.}
We have presented an efficient and accurate numerical method 
to solve the space homogeneous Landau equation for plasma physics. The collision operator is solved in only $O(n^3\log_2 n)$ operations 
using a spectral collocation method and discrete fast Fourier transforms to evaluate the derivatives and the non-local term. We take
advantage of the particular structure of Coulombian interactions to reduce the number of discrete convolutions.  This approach highly improves the efficiency of the method and seems 
to be a good compromise between accuracy and computational cost. The scheme conserves mass, and approximate momentum and energy with
spectral accuracy provided that a sufficiently large support is the
velocity space is used.

This is a first step in the construction of an effective scheme for the
numerical solution of the Landau equation. We have seen that our method
is suitable for treating cases when the distribution function can be
effectively described with a reasonably low number of collocation
points, in particular, when the distribution
function is  smooth. For non-smooth solutions, this require more
investigations. In the near future we plan to extend this approach to
spatially nonhomogeneous situations following the previous works in 
\cite{CF,DDFT} and also to find  a suitable time discretization to
avoid the restriction (parabolic CFL)  on the time step \cite{FJ,LM05}.

\section*{Acknowledgement}

The author is supported by the EUROfusion Consortium and has
received funding from the Euratom research and training programme
2014-2018 under grant agreement No 633053. The views and opinions
expressed herein do not necessarily reflect those of the European Commission.


\end{document}